\newcommand\Q{{\mathbb Q}}
\newcommand\Z{{\mathbb Z}}
\newtheorem{theorem}{Theorem}[section]
\newtheorem{corollary}[theorem]{Corollary}
\newtheorem{proposition}[theorem]{Proposition}
\newtheorem{question}[theorem]{Question}
\theoremstyle{definition}
\newtheorem{example}[theorem]{Example}
\newtheorem{remark}[theorem]{Remark}
\numberwithin{equation}{section}
\begin{document}

\title[On the congruences of Eisenstein series]{On the congruences of Eisenstein series with polynomial indexes}

\author{Su Hu}
\address{Department of Mathematics, South China University of Technology, Guangzhou 510640, China}
\email{mahusu@scut.edu.cn}

\author{Min-Soo Kim}
\address{Department of Mathematics Education, Kyungnam University, Changwon, Gyeongnam 51767, Republic of Korea}
\email{mskim@kyungnam.ac.kr}

\author{Min Sha}
\address{School of Mathematical Sciences, South China Normal University, Guangzhou, 510631, China}
\email{shamin@scnu.edu.cn}

\subjclass[2010]{11F33, 11A07, 11B68, 11S80}

\keywords{Eisenstein series,  congruences,  Serre's $p$-adic family of Eisenstein series, $p$-adic analysis, Bernoulli numbers}

\begin{abstract}
In this paper, based on Serre's $p$-adic family of Eisenstein series, 
we prove a general family of congruences for Eisenstein series $G_k$ in the form
$$
\sum_{i=1}^n g_i(p)G_{f_i(p)}\equiv g_0(p)\mod p^N, 
$$
where   $f_1(t),\ldots,f_n(t)\in\Z[t]$ are non-constant integer polynomials with positive leading coefficients 
and  $g_0(t),\ldots,g_n(t)\in\Q(t)$ are rational functions. 
This generalizes the classical von Staudt--Clausen's and Kummer's congruences of Eisenstein series, 
and also yields some new congruences.  
\end{abstract}

\maketitle

\section{Introduction}
\label{Intro}

\subsection{Motivation}

Let $E_{k}$ ($k\geq 4$ even) be  normalized  Eisenstein series of weight $k$ for the modular group $\textrm{SL}_{2}(\mathbb{Z})$ given by the following $q$-expansion:
\begin{equation*}\label{Eisenstein}
E_{k}=1-\frac{2k}{B_{k}}\sum_{j=1}^{\infty}\sigma_{k-1}(j)q^{j},
\end{equation*}
where  $q=e^{2\pi i\tau}$, $B_{k}$ is the $k$-th Bernoulli number and $\sigma_{k-1}(j)=\sum_{d\mid j}d^{k-1}$.   
$E_{k}$ can be regarded as formal power series in the indeterminate $q$. 
If $f,g\in\mathbb{Q}[[q]]$ are power series and $N$ is a natural number, $f\equiv g\mod N$ means that $f$
and $g$ are both $N$-integral and the congruence holds coefficientwise \cite[page 132]{Gekeler}. 

In what follows, we assume that   $p$ is an odd prime. 

Several well-known congruences of $E_{k}$  have been given in \cite[page 164, Theorem 7.1]{Lang}. 
For example, from  von Staudt--Clausen's and Kummer's  congruences for Bernoulli numbers,
one can easily obtain (see \cite[Equation (1.3)]{Gekeler})
\begin{equation*}\label{E-Congruence}
\begin{aligned} 
&\quad\quad\quad E_{k}\equiv 1\mod p^{r} \quad \textrm{if} \quad k\equiv 0 \mod (p-1)p^{r-1} \\
&\textrm{and}\\
&\quad\quad\quad E_{k}\equiv E_{l} \mod p^{r} \quad \textrm{if} \quad k\equiv l \mod (p-1)p^{r},
\end{aligned}
\end{equation*}
for $k,l\geq r+1$, and $(k,p), (l,p)$ are regular. 
The last condition means that $p$ does not divide (the numerator of) $B_{k}$. 
Since this condition depends only on the residue class of $k \mod p-1$, it holds
simultaneously for $k$ and $l$ (see \cite[Equation (1.3)]{Gekeler}). By using Serre's theory of $p$-adic moular forms~\cite{Serre} and viewing the coefficients of Eisenstein series as Iwasawa functions \cite[Theorem 4.7]{Gekeler}, Gekeler \cite{Gekeler} proved several congruences of the shape $E_{k+l}\equiv E_{k}\cdot E_{l}$
modular prime power.

In this paper, we study congruence relations of Serre's normalized Eisenstein series (see \cite[page 194]{Serre})
\begin{equation}\label{eq:Gk}
G_{k}=-\frac{B_{k}}{2k}+\sum_{j =1}^{\infty}\sigma_{k-1}(j)q^{j}, \quad \textrm{$k \ge 4$ even.}
\end{equation} 
For further deductions, we make a convention that 
\begin{equation}    \label{G-convention}
G_k = 0, \quad \textrm{if $k \in \Z$ but $k$ is not even greater than $2$}. 
\end{equation} 

Let $f(t) \in \Z[t]$ have positive leading coefficient and satisfy $f(1)=0$. 
Then, von Staudt--Clausen's congruence of Bernoulli numbers in polynomial index (see \cite[Equation (1.2)]{Rosen}) implies that 
\begin{equation}\label{G-von}
2pf(p)G_{f(p)}\equiv 1 \mod p
\end{equation}
for every sufficiently large  prime $p$ (note that $f(p)$ is even because $f(1)=0$).  

Besides, let $f(t)$, $g(t)\in\Z[t]$ be distinct non-constant polynomials with positive leading coefficient, and suppose that $f(1)=g(1)\neq 0$. 
Let $d$ be the largest power of $t$ dividing $f(t)-g(t)$. 
Then, by using Kummer's congruence of Bernoulli numbers in polynomial index (see \cite[Equation (1.3)]{Rosen}), one can obtain 
\begin{equation}\label{G-Kummer}
G_{f(p)}\equiv G_{g(p)} \mod p^{d+1}
\end{equation}
for every sufficiently large odd prime $p$.

In order to generalize \eqref{G-von} and \eqref{G-Kummer}, we  consider the following problem:
\begin{question}
Given polynomials $f_1(t),\ldots,f_n(t)\in\Z[t]$ with positive leading coefficient, 
rational functions $g_0(t),g_1(t),\ldots,g_n(t)\in\Q(t)$, and a positive integer $N$, determine whether the congruence
\begin{equation*}
\sum_{i=1}^n g_i(p)G_{f_i(p)}\equiv g_0(p)\mod p^N
\end{equation*}
is true for any sufficiently large prime $p$.
\end{question}

This is inspired by a recent work of Julian Rosen \cite{Rosen}. He investigated a similar problem for Bernoulli numbers \cite[Question 1.1]{Rosen}, 
and he also obtained a very general criterion (see \cite[Theorem 1.2]{Rosen}).
The main tool  is a Taylor expansion for the Kubota-Leopoldt's $p$-adic zeta functions (see \cite[Proposition 2.1]{Rosen}).  
As pointed out in \cite[page 1896]{Rosen}, the well-known Kummer's and von Staudt--Clausen's  congruences of Bernoulli
numbers in polynomial index which have been given in \cite[Sections~9.5 and 11.4.2]{Cohen} can be deduced from this criterion.

\subsection{Main results}

From now on, let $N$ be a fixed positive integer. 
Let $f_1(t),\ldots,f_n(t)\in\Z[t]$ be non-constant integer polynomials with positive leading coefficients, 
and let $g_0(t),\ldots,g_n(t)\in\Q(t)$ be rational functions. 
Write $v_t$ for the $t$-adic valuation on $\Q(t)$, and set
$$
M=\min_{i=1,\ldots,n}\{v_t(g_i(t))\}.
$$
Here, we fix a convention that $v_t(0) = \infty$.  
For the $p$-adic valuation $v_p$, as usual we fix the convention $v_p(0) = \infty$. 

We define the following four conditions: 

{\bf C1}:
\begin{align*}
v_t\Big(g_{0}(t) 
& +\frac{1}{2}\big(1-\frac{1}{t}\big)\sum_{\substack{i=1\\f_{i}(1)=0}}^{n}g_{i}(t)f_{i}(t)^{-1} \\
& +\frac{1}{2}\sum_{\substack{i=1\\ \textrm{$f_{i}(1)\geq 4$ even}}}^{n}\frac{B_{f_{i}(1)}}{f_{i}(1)}(1-t^{f_{i}(1)-1})g_{i}(t)\Big)\geq N; 
\end{align*}

{\bf C2}: for every even integer $l \le 2$ and every $0 \leq m \leq N-M-1$,
$$
v_t\left(\sum_{\substack{i=1\\f_i(1)= l}}^{n} g_i(t)f_i(t)^{m}\right)\geq N-m; 
$$

{\bf C3}: for every even integer $l \ge 4$ and every $1\leq m\leq N-M-1$,
$$
v_t\left(\sum_{\substack{i=1\\f_i(1)= l}}^{n} g_i(t)(f_i(t)^{m}-l^{m})\right)\geq N-m; 
$$

{\bf C4}: for every even integer $l\ge 4$, 
$$
v_t\left(\sum_{\substack{i=1\\f_i(1)= l}}^{n} g_i(t)\right)\geq N.
$$

We remark that if $N-M < 1$, then the condition {\bf C2} automatically holds; 
and if $N-M \le 1$,  the condition {\bf C3} also holds automatically. 
Besides, if $f_i(1) \le 3$ for each $1 \le i \le n$, then the conditions 
{\bf C3} and {\bf C4} hold automatically.

In order to make our main result effective, let $P$ be a positive integer safisfying: 
\begin{itemize}
\item $P \ge  N -M +3$;

\item $P \ge |f_i(1)| + 1$ for each $1 \le i \le n$; 

\item for each $1 \le i \le n$ and any integer $j > P$, $f_i(j)> \max \{3,N\}$; 

\item for each $1 \le i \le n$, write $g_i(t) = t^{d_i}h_i(t)$ with $h_i(0) \ne 0$ for some integer $d_i \ge 0$, $P$ is not less than the numerator and denominator of $|h_i(0)| \in \Q$; (Under this condition, for any prime $p>P$ we have $v_p(g_i(p))=v_t(g_i(t))$ for each $1 \le i \le n$; see \cite[Proposition 3.1]{Rosen}.)

\item for each valuated function, say $h(t)$, in the $v_t$ valuation in the conditions {\bf C1}, {\bf C2}, {\bf C3} and {\bf C4} 
(for instance, in {\bf C4}, $h(t)=\sum_{\substack{i=1\\f_i(1)= l}}^{n} g_i(t)$), 
write $h(t) = t^{d}q(t)$ with $q(0) \ne 0$ for some integer $d \ge 0$, $P$ is not less than the numerator and denominator of $|q(0)| \in \Q$. (Under this condition, for any prime $p>P$ we have $v_p(h(p))=v_t(h(t))$ for each such function $h(t)$; see \cite[Proposition 3.1]{Rosen}.)
\end{itemize}

When the initial data $N, f_1, \ldots, f_n, g_0, \ldots, g_n$ are given, 
the verification of the conditions {\bf C1}, {\bf C2}, {\bf C3} and {\bf C4} is in fact a finite computation, 
and also it is easy to get an explicit choice for the integer $P$.

Our main result is the following congruence relation of Eisenstein series $G_{k}$. 

\begin{theorem}
\label{main}
The congruence
\begin{equation*}
\sum_{i=1}^n g_i(p)G_{f_i(p)}\equiv g_0(p)\mod p^N
\end{equation*}
holds for every odd prime $p > P$ if all the conditions {\bf C1}, {\bf C2}, {\bf C3} and {\bf C4} hold.  
\end{theorem}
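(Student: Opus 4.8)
The strategy is to reduce everything to a single $p$-adic analytic statement about the constant terms $-B_k/2k$ of the Eisenstein series, and then apply Rosen's machinery for Bernoulli numbers in polynomial index. The key observation is that the non-constant part of $G_k$, namely $\sum_{n\ge 1}\sigma_{k-1}(n)q^n$, behaves uniformly: for a prime $p$ and exponents $k\equiv k'\bmod (p-1)p^{N-1}$ one has $\sigma_{k-1}(n)\equiv\sigma_{k'-1}(n)\bmod p^N$ coefficientwise, since $d^{k-1}\equiv d^{k'-1}$ for $p\nmid d$ and both vanish mod $p^N$ when $p\mid d$ (here one uses $k,k'$ large, guaranteed by the third bullet in the definition of $P$). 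This is exactly the content of Serre's $p$-adic family of Eisenstein series. Consequently, after grouping the indices $i$ according to the value $l=f_i(1)$ — which is the residue of $f_i(p)\bmod (p-1)$ up to the relevant power of $p$ — the $q$-coefficient part of $\sum_i g_i(p)G_{f_i(p)}$ reduces mod $p^N$ to $\sum_l\big(\sum_{f_i(1)=l} g_i(p)\big)\cdot(\text{a fixed }q\text{-series depending only on }l)$, and one needs this to vanish mod $p^N$ for each $l$ with $l\ge 4$ even (the cases $l\le 2$ even and $l$ odd contribute nothing to the $q$-part because $G_{f_i(p)}$ has no such non-constant series, or the series telescopes). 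This vanishing is precisely condition \textbf{C4} together with the constant-term bookkeeping in \textbf{C2}, \textbf{C3}.

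First I would make the reduction precise: fix $p>P$ and for each group $l$ split $\sum_{f_i(1)=l} g_i(p)G_{f_i(p)}$ into its constant term and its positive-degree part. For the positive-degree part, apply the Serre/Gekeler congruence to replace each $\sigma_{f_i(p)-1}(n)$ by $\sigma_{l-1}(n)$ mod $p^N$ when $l\ge 4$ (using that $f_i(p)\equiv l\bmod (p-1)$ and $f_i(p)\equiv l\bmod p^{N}$ fails in general, so one actually needs the finer statement that the coefficient, viewed as an Iwasawa function of the weight, has a Taylor expansion — this is where Rosen's Proposition 2.1 enters, applied coefficientwise). One gets $\sum_{f_i(1)=l} g_i(p)\sigma_{f_i(p)-1}(n)\equiv\big(\sum_{f_i(1)=l}g_i(p)(\text{Taylor data})\big)$, and condition \textbf{C4} (vanishing of $\sum g_i$ to order $N$ in $t$, hence mod $p^N$ by the $v_t=v_p$ transfer in the last two bullets of the choice of $P$) kills the leading term, while \textbf{C3} controls the higher Taylor terms $f_i(t)^m-l^m$ against the loss of $p^m$ from differentiating. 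For $l\le 2$ even and for odd $l$, use the convention \eqref{G-convention}: those $G_{f_i(p)}$ are $0$, so $f_i(1)$ with $f_i(p)$ not even $>2$ contributes nothing — but note $f_i(p)$ is governed by $f_i(1)$ only mod $p-1$, so the relevant statement is that $f_i(p)$ itself is even and $\ge 4$ for $p>P$, which follows from the bullets defining $P$; the genuinely degenerate groups are $l=0$ and odd $l$, handled separately.

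Next I would assemble the constant terms. The constant term of $G_{f_i(p)}$ is $-B_{f_i(p)}/(2f_i(p))$, and $\sum_i g_i(p)\cdot(-B_{f_i(p)}/2f_i(p)) - g_0(p)$ must vanish mod $p^N$. Here Rosen's criterion \cite[Theorem 1.2]{rosen} applies almost verbatim, but I must carefully match conventions: $-B_k/2k$ versus $B_k/k$, the factor $1-1/t$ coming from the Euler factor at $p$ in the von Staudt–Clausen/Kummer setup for the groups with $l=0$, and the correction terms $\tfrac12\,\frac{B_l}{l}(1-t^{l-1})g_i(t)$ for $l\ge 4$ that reconcile "the $p$-adic zeta value" with "the actual Bernoulli number $B_{f_i(p)}$" — these are exactly the terms appearing inside \textbf{C1}. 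So \textbf{C1} is the translation of Rosen's main hypothesis to our constant terms, \textbf{C2} is the analogue of his polynomial-index von Staudt–Clausen condition for the $l\le 2$ groups (where $G$ vanishes but the Bernoulli number at those weights still carries arithmetic), and \textbf{C3}, \textbf{C4} handle the Kummer-type congruences for $l\ge 4$.

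The main obstacle I expect is the bookkeeping at the boundary cases $l\le 2$ and $l=0$, and more subtly the interaction between two different "mod $p^N$" phenomena: the congruence $f_i(p)\equiv l\bmod(p-1)$ (which controls the $q$-series via Serre) versus $f_i(p)\equiv f_j(p)\bmod p$-powers (which controls the Bernoulli constant term via Kummer). One must verify that the single choice of $P$ in the excerpt simultaneously makes (i) all $f_i(p)$ genuinely even and $>\max\{3,N\}$, (ii) the $v_t$-to-$v_p$ transfer valid for every auxiliary function in \textbf{C1}–\textbf{C4}, and (iii) Serre's family congruence applicable to depth $N$. Once the two layers are separated cleanly — $q$-series part handled by \textbf{C3}, \textbf{C4} plus Serre, constant-term part handled by \textbf{C1}, \textbf{C2} plus Rosen's Taylor expansion of the Kubota–Leopoldt zeta function — summing over the finitely many groups $l\in S_1$ and collecting errors of $v_p\ge N$ finishes the proof.
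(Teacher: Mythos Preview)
Your plan is the paper's proof: pass from $G_k$ to $G_k^*$ via \eqref{con}, then handle the constant term $a_0(G^*_{f_i(p)})$ by Rosen's Taylor expansion (the paper's Proposition~\ref{lemma1}, using \textbf{C1}, \textbf{C2}, \textbf{C3}) and each $q$-coefficient $a_n(G^*_{f_i(p)})$ by an analogous Taylor expansion in the weight (using \textbf{C2}, \textbf{C3}, \textbf{C4}).

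Two clarifications are worth making. First, the Taylor expansion you need for $a_n(G^*_k)=\sigma^*_{k-1}(n)$ is not literally ``Rosen's Proposition~2.1 applied coefficientwise'' --- that result is about the Kubota--Leopoldt $p$-adic $L$-function --- but a separate (and easier) lemma that the paper isolates as its new ingredient (Proposition~\ref{lemma3}): write $\langle d\rangle=1+pq_d$, expand $(1+pq_d)^k$ binomially, and read off $v_p(a_m^{(n)}(p,l))\ge m-\frac{m}{p-1}$. Second, your treatment of the groups with even $l\le2$ is muddled: $G_{f_i(p)}$ does \emph{not} vanish there, since $f_i(p)$ is even and $\ge4$ for $p>P$; those groups contribute fully to both the constant term and the $q$-series, and \textbf{C2} is precisely what controls them in both Propositions~\ref{lemma2} and~\ref{lemma4} (the $l=0$ group additionally produces the $\tfrac12(1-\tfrac1t)$ term in \textbf{C1} via $a_0^{(0)}(p,0)=1-\tfrac1p$). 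Only the groups with $l$ odd genuinely drop out via convention~\eqref{G-convention}.
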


Theorem~\ref{main}  is an analogue of \cite[Theorem~1.2]{Rosen} and moreover in an effective manner.  

The condition that all the polynomials $f_i$ are non-constant is for simplicity. 
But it is not essential, that is, for each $f_i$ that is constant, we can move the term 
$g_i(p)G_{f_i(p)}$ to the right hand side of the congruence in Theorem~\ref{main}. 

\begin{remark}
Using Theorem~\ref{main}, we can directly recover the congruences \eqref{G-von} and \eqref{G-Kummer}. 
For proving \eqref{G-von}, we choose $N=1$, $n=1$, $f_1(t)=f(t)$ satisfying $f(1)=0$, $g_0(t)=1$ and $g_1(t) = 2tf(t)$ in Theorem~\ref{main}; 
while for proving \eqref{G-Kummer}, we choose $n=2$, $f_{1}(t)=f(t)$, $f_{2}(t)=g(t)$, $g_{0}(t)=0$, $g_{1}(t)=1$, $g_{2}(t)=-1$ 
and $N=v_t(f-g)+1$  in Theorem~\ref{main} and notice the condition $f(1)=g(1)\neq 0$.  
\end{remark}

The following corollary is a generalization of \eqref{G-Kummer}. 

\begin{corollary}  \label{cor:Kummer}
In Theorem~\ref{main}, choose 
$$
N = 1 + \min_{1 \le i, j \le n} v_t(f_i - f_j),
$$ 
and assume that $f_1(1) = \ldots = f_n(1) \ne 0$, $g_0 = 0$, $g_1 + \ldots + g_n = 0$. 
Then, for any odd prime $p > P$, we have 
$$
\sum_{i=1}^n g_i(p)G_{f_i(p)} \equiv 0 \mod p^N. 
$$
\end{corollary}

The following corollary is a direct consequence of Theorem~\ref{main}. 

\begin{corollary}  \label{coro}
Assume that $f_i(1) \le 3$ for each $1 \le i \le n$. 
Then, The congruence
\begin{equation*}
\sum_{i=1}^n g_i(p)G_{f_i(p)}\equiv g_0(p)\mod p^N
\end{equation*}
holds for every odd prime $p > P$ if the following two conditions hold: 

$(1)$ 
\begin{align*}
v_t\Big(g_{0}(t)  +\frac{1}{2}\big(1-\frac{1}{t}\big)\sum_{\substack{i=1\\f_{i}(1)=0}}^{n}g_{i}(t)f_{i}(t)^{-1} \Big)\geq N; 
\end{align*}

$(2)$
 for every even integer $l \le 2$ and every $0 \leq m \leq N-M-1$,
$$
v_t\left(\sum_{\substack{i=1\\f_i(1)= l}}^{n} g_i(t)f_i(t)^{m}\right)\geq N-m. 
$$
\end{corollary}

From Corollary~\ref{coro}, one can get some more examples about congruence of Eisenstein series. 

\begin{example}
In Corollary~\ref{coro}, we choose $n=2$, $g_{0}(t)=0$, $g_{1}(t)=1$, $g_{2}(t)=-1$ 
and $N=v_t(f_1 - f_2) - 1$ such that  $f_1(0)f_2(0) \ne 0$, $f_1(1)=f_2(1)=0$ and $N \ge 1$, then we have 
 $$
 G_{f_1(p)}\equiv G_{f_2(p)} \mod p^{N}
 $$
 for any sufficiently large prime $p$. 
\end{example}

\begin{example}
In Corollary~\ref{coro}, we choose $n \ge 2$, $f_{i}(t)=a_i(t-1)$ for each $1 \le i \le n$, 
$g_{0}(t)=\frac{1}{2}(\frac{1}{a_2} + \ldots + \frac{1}{a_n}  - \frac{n-1}{a_1})$, $g_{1}(t)=(n-1)t$, $g_{i}(t)=-t$ for each $2 \le i \le n$,
and $N=2$, then 
we have for any prime $p > \max\{4, n-1\}$, 
$$
(n-1)pG_{a_1(p-1)} - \sum_{i=2}^{n}pG_{a_i(p-1)} \equiv \frac{1}{2}(\frac{1}{a_2} + \ldots + \frac{1}{a_n}  - \frac{n-1}{a_1}) \mod p^2.
$$
In particular, we have  for any  prime $p > 4$,
 $$
p G_{a_1(p-1)} - p G_{a_2(p-1)} \equiv  \frac{1}{2}(\frac{1}{a_2} - \frac{1}{a_1}) \mod p^2. 
 $$
\end{example}

For the proof  of Theorem~\ref{main},  the approach is similar as in \cite{Rosen}, 
but it indeed needs some extra considerations in the setting of Eisenstein series. 
For example, we need a new ingredient, that is, a Taylor expansion for the non-constant coefficients of $p$-adic Eisenstein series in Proposition \ref{lemma3}, 
which will play a key role in the proof.

Our paper will be organized as follows. In Section~\ref{sec:Serre}, we give a brief recall of Serre's $p$-adic family of Eisenstein series. 
In Section~\ref{sec:proof}, we prove Theorem~\ref{main} and Corollary~\ref{cor:Kummer}.

\section{$p$-adic Eisenstein series}
\label{sec:Serre}
In this section, we recall some facts about Serre's $p$-adic family of Eisenstein series \cite{Serre}; see also \cite{HK}. 
Recall that $p$ is an odd prime. 

Serre's normalized Eisenstein series has been defined in \eqref{eq:Gk}. 
We pass to the $p$-adic limit. 
Let $X=\mathbb{Z}_{p}\times\mathbb{Z}/(p-1)\mathbb{Z}$, where $\Z_p$ is the ring of $p$-adic integer. 
The integers $\Z$ are embedded into $X$ naturally by $j \mapsto (j,j)$. 
For $k\in X$ and $j \geq 1$, define 
$$
\sigma_{k-1}^{*}(j)=\sum_{\substack{d|j \\
(p,d)=1}}d^{k-1}
$$ 
(see \cite[page 205]{Serre}, and see \cite[page 201]{Serre} for the definition of $d^{k-1}$). 
If $k$ is even (that is, $k \in 2X$), there exists a sequence of even
integers $\{k_{i}\}_{i=1}^{\infty}$ such that $|k_{i}|\rightarrow\infty$ and $k_{i} \rightarrow k$ when $i\to\infty$. 
Then, the sequence $G_{k_{i}}=-\frac{B_{k_{i}}}{2k_{i}}+\sum_{j=1}^{\infty}\sigma_{k_{i}-1}(j)q^{j}$ has a limit: 
(see \cite[page 206]{Serre}) 
\begin{equation}\label{eq:Gk*}
G^*_{k}=a_{0}(G^*_{k})+\sum_{j=1}^\infty a_{j}(G^*_{k})q^{j}
\end{equation}
with $a_{0}(G^*_{k})=\frac{1}{2}\zeta^{*}(1-k)$ by defining $\zeta^{*}(1-k)=\lim_{i\to\infty}\zeta(1-k_{i}) $  
and $a_{j}(G^*_{k})=\sigma_{k-1}^{*}(j)$, where $\zeta(s)$ is the Riemann zeta function.  
The function $\zeta^{*}$ is thus defined on the odd elements of $X\setminus\{1\}$. 

Let $\chi$ be a Dirichlet character on $\mathbb{Z}_{p}$, and let $L_{p}(s,\chi)$ be the $p$-adic $L$-function.
We have the following result on $\zeta^{*}$.

\begin{theorem}[{see \cite[page 206, Th{\'e}or{\`e}me 3]{Serre}}] \label{serre} 
 If $(s,u)\not=1$ is an odd element of
$X=\mathbb{Z}_{p}\times\mathbb{Z}/(p-1)\mathbb{Z}$,  then
$$
\zeta^{*}(s,u)=L_{p}(s,\omega^{1-u}),
$$ 
where $\omega$ is the Teichm\"uller character.
\end{theorem}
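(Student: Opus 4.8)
The plan is to fix the second coordinate $u$ and to regard both sides of the identity as continuous functions of the first coordinate $s\in\Z_p$, then to check that they agree on a set of $s$ that is dense in $\Z_p$, where explicit values are available on both sides. First I would observe that, for a fixed odd class $u\in\Z/(p-1)\Z$, the integers of the form $s=1-k$ with $k$ even and $k\equiv 1-u\pmod{p-1}$ fill out the coset $u+(p-1)\Z$; since $\gcd(p-1,p)=1$, the element $p-1$ is a unit in $\Z_p$, so $(p-1)\Z$ and hence this coset are dense in $\Z_p$. These are exactly the points at which $\zeta^{*}$ is pinned down by its defining limit, so density will let me pass from them to all of $\Z_p$.

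At such a point I would compute $\zeta^{*}(1-k,u)=\lim_i\zeta(1-k_i)$, choosing even integers $k_i$ with $k_i\to k$ in $\Z_p$, with $k_i\equiv 1-u\pmod{p-1}$, and with $k_i\to+\infty$ in the usual sense (all three are compatible by the Chinese Remainder Theorem since $\gcd(p-1,p)=1$). Using $\zeta(1-k_i)=-B_{k_i}/k_i$, the key point is that the $p$-adic Euler factor may be inserted for free: because $v_p(k_i)\le\log_p k_i$ for a positive integer and von Staudt--Clausen gives $v_p(B_{k_i})\ge -1$, one has $v_p\big(p^{k_i-1}B_{k_i}/k_i\big)\ge k_i-2-\log_p k_i\to\infty$, so $-B_{k_i}/k_i$ and $-(1-p^{k_i-1})B_{k_i}/k_i$ share the same $p$-adic limit. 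The latter quantity is precisely $L_{p}(1-k_i,\omega^{1-u})$: since $k_i\equiv 1-u\pmod{p-1}$ and $\omega$ has order $p-1$ we get $\omega^{k_i}=\omega^{1-u}$, so in the interpolation formula $L_{p}(1-k_i,\chi)=-(1-\chi\omega^{-k_i}(p)p^{k_i-1})B_{k_i,\chi\omega^{-k_i}}/k_i$ the twist $\chi\omega^{-k_i}=\omega^{1-u-k_i}$ is trivial, whence the generalized Bernoulli number collapses to $B_{k_i}$ and the local factor to $(1-p^{k_i-1})$.

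Combining these, the continuity of $s\mapsto L_{p}(s,\omega^{1-u})$ yields $\zeta^{*}(1-k,u)=\lim_i L_{p}(1-k_i,\omega^{1-u})=L_{p}(1-k,\omega^{1-u})$ at every point of the dense coset $u+(p-1)\Z$. Finally I would invoke the fact, built into the construction of the $p$-adic family recalled above, that $\zeta^{*}(s,u)$ is continuous in $s$ (it is the limiting constant term $2a_{0}(G^{*}_{k})$ of the family); then both sides are continuous on $\Z_p$ and agree on a dense set, hence everywhere. The excluded point $(s,u)=1$ is exactly the place where $\omega^{1-u}$ is the trivial character and $L_{p}(\,\cdot\,,\mathbf 1)$ has its simple pole at $s=1$, which is why it must be removed.

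The main obstacle will be justifying the interchange of the $p$-adic limit with the insertion of the Euler factor, i.e.\ that the bare limit of the rational values $\zeta(1-k_i)=-B_{k_i}/k_i$ recovers the Euler-corrected quantity that the Kubota--Leopoldt function interpolates. This is precisely where von Staudt--Clausen (to bound $v_p(B_{k_i}/k_i)$ from below) and the freedom to send $k_i\to+\infty$ while keeping $k_i\to k$ $p$-adically are both indispensable; once this is in place, the only remaining work is the bookkeeping that matches the power of the Teichm\"uller character across the two normalizations.
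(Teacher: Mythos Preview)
The paper does not give its own proof of this statement; it is quoted from Serre's original article and used as a black box throughout. There is therefore nothing in the paper to compare your proposal against.

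That said, the argument you outline is essentially the standard one and is correct. One small inefficiency: the detour through integer points $s=1-k$ followed by a density-and-continuity step is not needed. Your core computation---that $\zeta(1-k_i)$ and $-(1-p^{k_i-1})B_{k_i}/k_i=L_p(1-k_i,\omega^{1-u})$ differ by a term of $p$-adic valuation at least $k_i-2-\log_p k_i\to\infty$---applies to \emph{any} sequence of even integers $k_i$ with $k_i\to+\infty$, $k_i\equiv 1-u\pmod{p-1}$, and $k_i\to 1-s$ in $\Z_p$. Combined with the continuity of $s\mapsto L_p(s,\omega^{1-u})$, this directly shows that the defining limit $\zeta^{*}(s,u)=\lim_i\zeta(1-k_i)$ exists and equals $L_p(s,\omega^{1-u})$ for every odd $(s,u)\ne 1$. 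In particular, you do not need to assume separately that $\zeta^{*}$ is continuous in $s$; that continuity is a \emph{consequence} of the identification with $L_p$, not an input to it. Invoking it as something ``built into the construction'' is the one slightly circular step in your write-up, but the fix is just to reorganize as above.
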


For $k=(s,u)\in X$ and $u$ is even, by Theorem~\ref{serre} the coefficients of
$G_{k}^{*}=G_{s,u}^{*}$ are given by (see \cite[page 245]{Serre})
\begin{equation}\label{eq1}
\begin{aligned}
a_{0}(G_{s,u}^{*})&=\frac{1}{2}\zeta^{*}(1-s,1-u)=\frac{1}{2}L_{p}(1-s,\omega^{u}),\\
a_{j}(G_{s,u}^{*})&=\sum_{\substack{d|j \\
(p,d)=1}} d^{-1}\omega(d)^{u}\langle d\rangle^{s}, 
\end{aligned}
\end{equation}
where $\langle d \rangle = d / \omega(d) \equiv 1 \mod p$. 

Thus, the assignment
$$(s,u)\mapsto G_{s,u}^{*}$$
gives a family of $p$-adic modular forms parametrized by the group of weights $X$.

For any even integer $k\geq 4$, we first write 
\begin{equation}   \label{eq:Gka}
G_{k}=a_{0}(G_{k})+\sum_{j=1}^\infty a_{j}(G_{k})q^{j},  
\end{equation} 
where $a_{0}(G_{k})=-\frac{B_k}{2k}, \,  a_{j}(G_{k}) = \sigma_{k-1}(j)$; 
and then from \eqref{eq1},  we  have
\begin{equation}\label{eq2}
\begin{aligned}
a_{0}(G_{k}^{*})&=a_{0}(G_{k,k}^{*})=\frac{1}{2}\zeta^{*}(1-k,1-k)   \\
&=\frac{1}{2}L_{p}(1-k,\omega^{k}) =-\frac{1-p^{k-1}}{2}\frac{B_{k}}{k} \\
&=(1-p^{k-1})a_{0}(G_{k}),
\end{aligned}
\end{equation}
where we also use the relation between $L_p$ and Bernoulli numbers 
(see, for instance, the first paragraph in the proof of \cite[Proposition 2.1]{Rosen}), 
and
\begin{equation}\label{eq2-1}
a_{j}(G_{k}^{*})=a_{j}(G_{k,k}^{*})
=\sum_{\substack{d|j \\(p,d)=1}} d^{-1}\omega(d)^{k}\langle d\rangle^{k}    
=\sum_{\substack{d|j \\(p,d)=1}}d^{k-1}.
\end{equation}

The proof of our main result is based on the following relationship between the $p$-adic Eisenstein Series $G_{k}^{*}$ and the Eisenstein series $G_{k}$:
\begin{equation}\label{con} 
G_{k}\equiv G_{k}^{*}\mod p^{k-1},  \qquad \textrm{$k \ge 4$ even}, 
\end{equation}
which can be easily deduced from \eqref{eq:Gk*},  \eqref{eq:Gka}, \eqref{eq2} and \eqref{eq2-1}.

As in \eqref{G-convention}, we also make a convention that 
\begin{equation}    \label{G*-convention}
G_k^* = 0, \quad \textrm{if $k \in \Z$ but $k$ is not even greater than $2$}. 
\end{equation}

\section{Proofs of the main results}
\label{sec:proof}

Recall that $p$ is an odd prime. 
For the proof, we need some preparations. The first one follows from \eqref{eq2} and \cite[Proposition 2.1]{Rosen}  directly.

 \begin{proposition}\label{lemma1}
Let  $l$ be an even residue class modulo $p-1$. 
Then, there exist coefficients $a_m^{(0)}(p,l)\in\Q_p, m=0,1,2,\ldots$, such that for every even integer  
 $k \ge 4$ with $k\equiv l \pmod{p-1}$, there is a convergent $p$-adic series identity
\begin{equation}
\label{B}
a_{0}(G_{k}^{*})=-\frac{1-p^{k-1}}{2}\frac{B_{k}}{k}=-\frac{1}{2}\sum_{m= 0}^{\infty}a_{m}^{(0)}(p,l)k^{m-1}.
\end{equation}
The coefficients $a_{m}^{(0)}(p,l)$ satisfy the following conditions:
\begin{enumerate}
\item[{\rm(1)}]
\[
a_{0}^{(0)}(p,l)=\begin{cases}1-\frac{1}{p}&\text{ if }l \equiv 0 \mod p-1,\\
0&\text{ otherwise,}\end{cases}
\]
\item[{\rm(2)}] for all $m$, $p$ and $l$,
\[
v_p(a_{m}^{(0)}(p,l))\geq \frac{p-2}{p-1}m - 2,
\]
\item[{\rm(3)}] for $p\geq m+2$ and all $l$,
\[
v_p(a_{m}^{(0)}(p,l))\geq m-1.
\]
\end{enumerate}
\end{proposition}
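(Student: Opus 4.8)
The plan is to recognize the constant term $a_0(G_k^*)$ as, up to an elementary factor, a single value of a Kubota--Leopoldt $p$-adic $L$-function, and then to quote Rosen's Taylor expansion of that function. First I would recall from \eqref{eq2} that for every even integer $k\ge 4$,
$$
a_0(G_k^*)=\tfrac12\, L_p(1-k,\omega^k)=-\frac{1-p^{k-1}}{2}\,\frac{B_k}{k}.
$$
Since the Teichm\"uller character $\omega$ has order $p-1$, the twist $\omega^k$ depends only on the residue class $k\bmod(p-1)$; hence, whenever $k\equiv l\pmod{p-1}$ one may replace $\omega^k$ by $\omega^l$, so that $a_0(G_k^*)=\tfrac12 L_p(1-k,\omega^l)$ is the value at $s=1-k$ of the fixed function $s\mapsto\tfrac12 L_p(s,\omega^l)$, which is $p$-adic analytic when $l\not\equiv 0$ and meromorphic with a simple pole at $s=1$ when $l\equiv 0\pmod{p-1}$. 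This is precisely the normalization in which \cite[Proposition 2.1]{rosen} operates.

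Then I would invoke \cite[Proposition 2.1]{rosen}, which supplies a convergent $p$-adic power-series expansion of $k\mapsto -(1-p^{k-1})B_k/k$ in powers of $k$ (with an allowed $k^{-1}$ term), valid for every positive integer $k$ in the fixed residue class $l$ modulo $p-1$, together with the exact value of the leading coefficient and the two lower bounds for $v_p$ of the Taylor coefficients. Substituting this expansion into the displayed identity above, and renaming the coefficients $a_m^{(0)}(p,l)$, would give \eqref{B} at once; items (1), (2) and (3) are then exactly the stated properties of those coefficients, and the hypothesis $k\ge 4$ is simply a special case of the range $k\ge 1$ that Rosen allows.

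The only points that need care here are bookkeeping rather than mathematics. One has to check that the factor $-\tfrac12$ linking $a_0(G_k^*)$ to $-(1-p^{k-1})B_k/k$ is the very factor written in front of the sum in \eqref{B}, so that the $a_m^{(0)}(p,l)$ really are Rosen's coefficients and inherit his estimates unchanged; and one should note that the $m=0$ term $a_0^{(0)}(p,l)k^{-1}$ records the simple pole of $L_p(s,\omega^0)$ at $s=1$ (equivalently at $k=0$), whose residue is $1-1/p$ --- which is exactly why item (1) is a case distinction according to whether $l\equiv 0\pmod{p-1}$. Beyond this translation no new argument is required, so I do not expect any genuine obstacle in this proposition; the real novelty of the paper will lie in the later Taylor expansion for the non-constant coefficients of the $p$-adic Eisenstein series, not here.
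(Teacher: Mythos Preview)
Your proposal is correct and matches the paper's approach exactly: the paper simply states that the proposition follows directly from \eqref{eq2} and \cite[Proposition~2.1]{rosen}, which is precisely the reduction you carry out. Your additional remarks about the Teichm\"uller character, the factor $-\tfrac12$, and the pole at $s=1$ just make explicit the bookkeeping implicit in that citation.
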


Using Proposition~\ref{lemma1}, we obtain a congruence relation for the coefficient $a_0(G^*_k)$ 
in polynomial index. 

\begin{proposition}
\label{lemma2}
The congruence
\begin{equation*}
\sum_{i=1}^n g_i(p)a_{0}(G_{f_i(p)}^{*})\equiv g_0(p)\mod p^N
\end{equation*}
holds for every prime $p > P$ if the conditions {\bf C1},  {\bf C2} and {\bf C3} hold. 
\end{proposition}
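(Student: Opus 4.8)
The plan is to expand each $a_0(G^*_{f_i(p)})$ using Proposition~\ref{lemma1}, collect terms by the residue class $l = f_i(1) \bmod (p-1)$ (which for $p > P$ coincides with the actual value $f_i(1)$ since $P \geq |f_i(1)|+1$), and then match the resulting $p$-adic series against $g_0(p)$ coefficient-by-coefficient in powers of the polynomials $f_i$, using conditions {\bf C1}, {\bf C2}, {\bf C3} to kill everything of $p$-adic valuation $< N$.

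\medskip

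\emph{Step 1: Set up the expansion.} Fix a prime $p > P$. For each $i$, the weight $f_i(p)$ is a positive even integer (since $f_i(1) \in S_1$ determines its parity mod $p-1$, and the defining conditions on $P$ force $f_i(p) > \max\{3,N\} \geq 3$ when $f_i$ is non-constant with positive leading coefficient — here one must handle separately the indices with $f_i(1) \leq 2$, where $G_{f_i(p)}$ and $G^*_{f_i(p)}$ are genuinely of weight $\geq 4$ anyway once $p$ is large, so the convention \eqref{G*-convention} is not actually triggered). Apply Proposition~\ref{lemma1} with $l = f_i(1)$ and $k = f_i(p)$ to get
$$
a_0(G^*_{f_i(p)}) = -\frac{1}{2}\sum_{m=0}^{\infty} a_m^{(0)}(p, f_i(1)) f_i(p)^{m-1}.
$$
Multiply by $g_i(p)$ and sum over $i$. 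Group the sum by the distinct values $l$ appearing in $S_1$.

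\medskip

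\emph{Step 2: Truncate the series.} Using part (3) of Proposition~\ref{lemma1}, $v_p(a_m^{(0)}(p,l)) \geq m-1$ for $p \geq m+2$, and combined with the bound $v_p(g_i(p)) = v_t(g_i(t)) \geq M$ valid for $p > P$, the tail $\sum_{m \geq N-M+1}$ contributes valuation $\geq N$ (the condition $P \geq N-M+3$ ensures $p \geq m+2$ throughout the relevant range). So modulo $p^N$ we may replace each inner series by its truncation $\sum_{m=0}^{N-M}$. This reduces the problem to a finite identity. Note $f_i(p)^{m-1}$ for $m=0$ contributes the term $f_i(p)^{-1}$, so one separates the $m=0$ term from $m \geq 1$.

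\medskip

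\emph{Step 3: Match against {\bf C1}, {\bf C2}, {\bf C3}.} The $m=0$ contribution, using part (1) of Proposition~\ref{lemma1}, only survives for the indices with $f_i(1) \equiv 0 \bmod (p-1)$, i.e.\ (for $p$ large) $f_i(1) = 0$, giving $-\tfrac12(1-\tfrac1p)\sum_{f_i(1)=0} g_i(p) f_i(p)^{-1}$; together with the $m \geq 1$ contribution from indices with $f_i(1) \geq 4$ even, after replacing $a_m^{(0)}(p,l)$ by its known closed form coming from \eqref{eq2} and the Kummer-type expansion (this is where Proposition~\ref{lemma1} encodes $-\tfrac12 L_p(1-k,\omega^l)$), these assemble precisely into the expression whose $v_t$-valuation is bounded in {\bf C1}. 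The point is that $v_p$ of that expression (evaluated at $p$) equals its $v_t$-valuation since $P$ dominates the relevant numerators/denominators; {\bf C1} then gives $\geq N$. The remaining terms — the $m \geq 1$ part for indices with $f_i(1) \geq 4$ after subtracting the $l^m$-normalization, and the full contribution of indices with $f_i(1) \leq 2$ — are controlled termwise: for each $m$ in range, part (3) gives a factor $p^{m-1}$ (or $p^{m}$ after accounting for the extra factor from the structure of $a_m^{(0)}$), and {\bf C2}, {\bf C3} supply $v_t(\cdot) \geq N-m$ for the polynomial combination, so each term has valuation $\geq N$.

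\medskip

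The main obstacle I expect is bookkeeping the $m=0$ and $m=1$ terms carefully: these carry the "$-1$" power of $f_i(p)$ and the interplay with the Bernoulli/$L$-function normalization, so that the precise expression in {\bf C1} (with its $\frac{B_{f_i(1)}}{f_i(1)}(1-t^{f_i(1)-1})$ factor) emerges — this requires knowing not just the valuation bounds in Proposition~\ref{lemma1} but the exact values of $a_0^{(0)}(p,l)$ and a closed description of the leading behavior of $a_m^{(0)}(p,l)$ for small $m$, obtained by Taylor-expanding $L_p(1-k,\omega^l)$ around $k = l$. The valuation estimates (Steps 2 and the termwise bound in Step 3) are routine once the truncation is justified; the delicate part is confirming the constant-term identity matches {\bf C1} verbatim.
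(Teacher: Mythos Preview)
Your approach is the same as the paper's. Two small corrections: in Step~2, the parenthetical ``$P \geq N-M+3$ ensures $p \geq m+2$ throughout the relevant range'' cannot cover the entire infinite tail $m \geq N-M+1$; for $m \geq p-1$ the paper invokes part~(2) of Proposition~\ref{lemma1} instead (yielding $v_p(a_m^{(0)}(p,l)) \geq \tfrac{p-2}{p-1}m - 2 \geq p-4 \geq N-M$, which suffices). In Step~3, your anticipated ``main obstacle'' is resolved exactly as you guess: the paper substitutes $k=l$ into the identity~\eqref{B} to obtain $a_1^{(0)}(p,l) = (1-p^{l-1})\tfrac{B_l}{l} - \sum_{m\geq 2} a_m^{(0)}(p,l)\, l^{m-1}$, which both produces the Bernoulli term $\tfrac{B_{f_i(1)}}{f_i(1)}(1-p^{f_i(1)-1})$ appearing in {\bf C1} and converts the $l\geq 4$, $m\geq 2$ contribution into the differences $f_i(p)^{m-1} - l^{m-1}$ governed by {\bf C3}.
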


\begin{proof} 
We extend the proof of \cite[Theorem 1.2]{Rosen} to our case.

Since $p > P$ and noticing the choice of $P$, we know that $f_i(p) \ge 4$ for each $1 \le i \le n$.  
In view of the convention \eqref{G*-convention}, we  consider the quantity
\begin{equation*}
A^{(0)}(p)=g_{0}(p)-\sum_{i=1}^{n}g_{i}(p)a_{0}(G_{f_{i}(p)}^{*}).
\end{equation*}
By Proposition~\ref{lemma1},  we have
\begin{equation*}
\begin{aligned}
A^{(0)}(p)
&=g_{0}(p)+\sum_{\substack{i=1 \\ \textrm{$f_i(p)$ even}}}^{n}g_{i}(p)\left(\frac{1}{2}\sum_{m=0}^{\infty}a_{m}^{(0)}(p,f_{i}(p))f_{i}(p)^{m-1}\right)\\
&=g_{0}(p)+\sum_{\substack{h\in\mathbb{Z}/(p-1)\mathbb{Z}\\ \textrm{$h$ even, $m\geq 0$}}} \sum_{\substack{i=1\\f_{i}(p)\equiv h \!\!\!\!\mod p-1}}^{n}\frac{1}{2}g_{i}(p)f_{i}(p)^{m-1} a_{m}^{(0)}(p,h).
\end{aligned}
\end{equation*}
Since $f_{i}(p)\equiv f_{i}(1) \pmod{p-1}$ for each $1 \le i \le n$,  we have
\begin{equation*}
\begin{aligned}
A^{(0)}(p)&=g_{0}(p)+\sum_{\substack{ \textrm{ even $l\in\mathbb{Z}$} \\ m\geq 0}}\sum_{\substack{i=1\\f_{i}(1)=l}}^{n}\frac{1}{2}g_{i}(p)f_{i}(p)^{m-1} a_{m}^{(0)}(p,l)\\
&=g_{0}(p)+\frac{1}{2}\sum_{\substack{l\leq 2~~ \textrm{even}\\ l \ne 0,\,  m\geq 0}}\sum_{\substack{i=1\\f_{i}(1)=l}}^{n}g_{i}(p)f_{i}(p)^{m-1}a_{m}^{(0)}(p,l)
\\&\quad+\frac{1}{2}\sum_{m\geq 0}\sum_{\substack{i=1\\f_{i}(1)=0}}^{n}g_{i}(p)f_{i}(p)^{m-1}a_{m}^{(0)}(p,0)
 \\&\quad+\frac{1}{2}\sum_{\substack{l\geq 4~~ \textrm{even}\\ m\geq 0}}\sum_{\substack{i=1\\f_{i}(1)=l}}^{n}g_{i}(p)f_{i}(p)^{m-1}a_{m}^{(0)}(p,l),
\end{aligned}
\end{equation*}
which, by Proposition~\ref{lemma1}~(1), becomes 
\begin{equation}\label{eq9}
\begin{aligned}
& A^{(0)}(p)=g_{0}(p)+\frac{1}{2}\sum_{\substack{l\leq 2~~ \textrm{even}\\ l \ne 0, \, m\geq 0}}\sum_{\substack{i=1\\f_{i}(1)=l}}^{n}g_{i}(p)f_{i}(p)^{m-1}a_{m}^{(0)}(p,l) \\
&\,\, +\frac{1}{2}\left(1-\frac{1}{p}\right)\sum_{\substack{i=1\\f_{i}(1)=0}}^{n}g_{i}(p)f_{i}(p)^{-1}
+\frac{1}{2}\sum_{m\geq 1}\sum_{\substack{i=1\\f_{i}(1)=0}}^{n}g_{i}(p)f_{i}(p)^{m-1}a_{m}^{(0)}(p,0) \\
 &\,\, +\frac{1}{2}\sum_{\substack{l\geq 4~~ \textrm{even}\\ m\geq 0}}\sum_{\substack{i=1\\f_{i}(1)=l}}^{n}g_{i}(p)f_{i}(p)^{m-1}a_{m}^{(0)}(p,l).
\end{aligned}\end{equation}

Due to the choice of $P$ and $p>P$, we have $p > |f_i(1)| +1$ for each $1\le i \le n$. 
So, for any even $l$ satisfying $l=f_i(1)$ for some $1 \le i \le n$,  it can not happen that $l \equiv 0 \pmod{p-1}$, 
which together with Proposition~\ref{lemma1}~(1)  implies that 
\begin{equation}   \label{eq:a00}
a_{0}^{(0)}(p,l)=0.
\end{equation} 
Thus, from \eqref{eq2}, \eqref{B} and \eqref{eq:a00}, for any even $l \ge 4$ satisfying $l=f_i(1)$ for some $1 \le  i \le n$,  we have
\begin{equation}
\label{C}
\begin{split}
a_{1}^{(0)}(p,l) & =-2a_{0}(G_{l}^{*})-\sum_{m\geq 2}a_{m}^{(0)}(p,l)l^{m-1}  \\
& = (1-p^{l-1}) \frac{B_l}{l} -\sum_{m\geq 2}a_{m}^{(0)}(p,l)l^{m-1}. 
\end{split}
\end{equation}
Substituting \eqref{eq:a00} and \eqref{C} into \eqref{eq9}, we have
\begin{equation}\label{eqnew2}
\begin{aligned}
& A^{(0)}(p)=g_{0}(p)+\frac{1}{2}\sum_{\substack{l\leq 2~~ \textrm{even}\\ l \ne 0, \, m\geq 1}}\sum_{\substack{i=1\\f_{i}(1)=l}}^{n}g_{i}(p)f_{i}(p)^{m-1}a_{m}^{(0)}(p,l)  \\
&\,\, +\frac{1}{2}\left(1-\frac{1}{p}\right)\sum_{\substack{i=1\\f_{i}(1)=0}}^{n}g_{i}(p)f_{i}(p)^{-1}
+\frac{1}{2}\sum_{m\geq 1}\sum_{\substack{i=1\\f_{i}(1)=0}}^{n}g_{i}(p)f_{i}(p)^{m-1}a_{m}^{(0)}(p,0)  \\
&\,\,  +\frac{1}{2}\sum_{l\geq 4~~\textrm{even}}\sum_{\substack{i=1\\f_{i}(1)=l}}^{n}\frac{B_{l}}{l}(1-p^{l-1})g_{i}(p)  \\
&\,\,  +\frac{1}{2}\sum_{\substack{l\geq 4~~ \textrm{even}\\ m\geq 2}}\sum_{\substack{i=1\\f_{i}(1)=l}}^{n}g_{i}(p)(f_{i}(p)^{m-1}-l^{m-1})a_{m}^{(0)}(p,l).\end{aligned}
\end{equation}

Under the condition {\bf C1} and noticing the choices of $p$ and $P$, we have 
\begin{equation}\label{eq:C1}
\begin{aligned}
g_{0}(p) & +\frac{1}{2}\left(1-\frac{1}{p}\right)\sum_{\substack{i=1\\f_{i}(1)=0}}^{n}g_{i}(p)f_{i}(p)^{-1}
 \\& + \frac{1}{2}\sum_{l\geq 4~~\textrm{even}}\sum_{\substack{i=1\\f_{i}(1)=l}}^{n} \frac{B_{l}}{l}(1-p^{l-1})g_{i}(p)\equiv 0\mod p^{N}.
 \end{aligned}
 \end{equation}
 
For every even integer $l \le 2$  satisfying $l = f_i(1)$ for some $1 \le i \le n$, 
under the condition {\bf C2} and due to the choices of $p$ and $P$, for any $1 \le m \le N-M$ we have 
 $$
 v_p\Big(\sum_{\substack{i=1\\f_{i}(1)=l}}^{n} g_{i}(p)f_{i}(p)^{m-1}\Big) 
 = v_p\Big(\sum_{\substack{i=1\\f_{i}(1)=l}}^{n} g_{i}(t)f_{i}(t)^{m-1}\Big)
 \ge N - (m-1); 
 $$
and by Proposition~\ref{lemma1}~(3)   and noticing $p \ge N-M+2 \ge m + 2$ due to the choice of $P$, we have 
$$
v_p(a_{m}^{(0)}(p,l)) \ge m - 1; 
$$
and so we obtain 
\begin{equation}
\label{eq:C21}
 v_p\Big(\sum_{\substack{i=1\\f_{i}(1)=l}}^{n} g_{i}(p)f_{i}(p)^{m-1}a_{m}^{(0)}(p,l)\Big) \ge N, \quad 1 \le m \le N-M. 
\end{equation}
If $m \ge N-M+1$ and $p \ge m+2$, then by Proposition~\ref{lemma1}~(3), we have 
$v_p(a_{m}^{(0)}(p,l)) \ge m - 1$, which together with $v_p(g_i(p)) = v_t(g_i(t)) \ge M$ for each $1 \le i \le n$ (due to the choices of $p$ and $P$) 
implies that for $m \ge N-M+1$ and $p \ge m+2$, for some $j$ with $f_j(1) = l$, 
\begin{equation}
\label{eq:C22}
\begin{split}
 v_p\Big(\sum_{\substack{i=1\\f_{i}(1)=l}}^{n} g_{i}(p)f_{i}(p)^{m-1}a_{m}^{(0)}(p,l)\Big) 
 & \ge v_p(g_j(p)) + v_p(a_{m}^{(0)}(p,l)) \\
 & \ge M + m -1 \ge  N. 
\end{split}
\end{equation}
If $m \ge N-M+1$ and $p \le m+1$, then by Proposition~\ref{lemma1}~(2) and noticing $v_p(g_i(p)) \ge M$ and $p >P \ge N-M+3$, we obtain 
\begin{equation}
\label{eq:C23}
\begin{split}
 v_p\Big(& \sum_{\substack{i=1\\f_{i}(1)=l}}^{n} g_{i}(p)f_{i}(p)^{m-1}a_{m}^{(0)}(p,l)\Big) 
  \ge M + v_p(a_{m}^{(0)}(p,l)) \\
 & \ge M+ \frac{p-2}{p-1}m - 2 \ge M+p -4 \ge N. 
\end{split}
\end{equation}
Thus, under the condition {\bf C2} and combining \eqref{eq:C21}, \eqref{eq:C22} with \eqref{eq:C23},  
for every even integer $l\leq 2$ satisfying $l = f_i(1)$ for some $1 \le i \le n$ and any $m \ge 1$, we have  
\begin{equation}\label{eq:C2} 
\sum_{\substack{i=1\\f_{i}(1)=l}}^{n} g_{i}(p)f_{i}(p)^{m-1}a_{m}^{(0)}(p,l) \equiv 0\mod p^{N}.
\end{equation}

As the above, under the condition {\bf C3} and noticing the choices of $p$ and $P$, 
for every even $l\geq 4$ and $m\geq 2$, we have
\begin{equation}\label{eq:C3}
\sum_{\substack{i=1\\f_{i}(1)=l}}^{n} g_{i}(p)(f_{i}(p)^{m-1}-l^{m-1})a_{m}^{(0)}(p,l)  \equiv 0  \mod p^{N}. 
\end{equation}

Finally, by \eqref{eqnew2}, \eqref{eq:C1}, \eqref{eq:C2} and \eqref{eq:C3} we conclude that $A^{(0)}(p)\equiv 0\mod p^{N}$ for any prime $p>P$. 
This completes the proof. 
\end{proof}

As an analogue of Proposition~\ref{lemma1}, we  obtain a 
convergent $p$-adic series identity for each coefficient $a_{j}(G_{k}^{*}), j \ge 1$. 
The approach here is different from the one in \cite[Proposition 2.1]{Rosen}. 

\begin{proposition}\label{lemma3}
Let $p$ be an odd prime, $l$ an even residue class modulo $p-1$, and $j$ a positive integer. 
Then, there exist coefficients $a_m^{(j)}(p,l)\in\Q_p, m=0,1,2,\ldots$, 
such that for every even integer  $k \ge 4$ with $k\equiv l \pmod{p-1}$, 
there is a convergent $p$-adic series identity
\begin{equation*}
\label{D}
a_{j}(G_{k}^{*})=\sum_{m= 0}^{\infty} a_{m}^{(j)}(p,l)k^{m}.
\end{equation*}
The coefficients $a_m^{(j)}(p,l)$ satisfy the following conditions:
\begin{enumerate}
\item[{\rm(1)}] for all $m$, $p$, $l$,
\[
v_p(a_{m}^{(j)}(p,l))\geq \frac{p-2}{p-1} m,
\]
\item[{\rm(2)}] for $p\geq m+2$ and all $l$,
\[
v_p(a_{m}^{(j)}(p,l))\geq m.
\]
\end{enumerate}
\end{proposition}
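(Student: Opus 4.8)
The plan is to exploit the explicit closed form \eqref{eq1} for the non-constant coefficients. For an integer $k\equiv l\pmod{p-1}$ one has $\omega(d)^k=\omega(d)^l$, so \eqref{eq1} reads
\[
a_n(G_k^*)=\sum_{\substack{d\mid n\\(p,d)=1}}d^{-1}\omega(d)^l\langle d\rangle^k,
\]
which is a \emph{finite} sum over the divisors of $n$ prime to $p$. Each such $d$ lies in $\Z_p^\times$, so $\langle d\rangle=d/\omega(d)\in 1+p\Z_p$; since $p$ is odd, the $p$-adic logarithm $\log\langle d\rangle$ is defined and $v_p(\log\langle d\rangle)\ge 1$, whence $\langle d\rangle^k=\exp(k\log\langle d\rangle)$ and, expanding the exponential series, $\langle d\rangle^k=\sum_{m\ge 0}\frac{(\log\langle d\rangle)^m}{m!}k^m$ as a convergent $p$-adic series. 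I would then set
\[
a_m^{(n)}(p,l)=\sum_{\substack{d\mid n\\(p,d)=1}}d^{-1}\omega(d)^l\,\frac{(\log\langle d\rangle)^m}{m!}\in\Q_p,
\]
which depends only on $n,p,l,m$, and obtain the asserted identity by interchanging the finite sum over $d$ with the convergent sum over $m$.

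For the bounds, the only inputs are $v_p(d^{-1}\omega(d)^l)=0$, $v_p(\log\langle d\rangle)\ge 1$, and Legendre's formula $v_p(m!)=\frac{m-s_p(m)}{p-1}$, where $s_p(m)$ is the base-$p$ digit sum of $m$. Combining termwise, for $m\ge 1$,
\[
v_p\big(a_m^{(n)}(p,l)\big)\ge m-v_p(m!)=m-\frac{m-s_p(m)}{p-1}\ge m-\frac{m}{p-1},
\]
which is (1) (and $m=0$ is trivial, since $a_0^{(n)}(p,l)=\sum_{d}d^{-1}\omega(d)^l\in\Z_p$). When $p\ge m+2$ we have $0\le m\le p-2$, so $s_p(m)=m$, $v_p(m!)=0$, and the same estimate gives $v_p(a_m^{(n)}(p,l))\ge m$, which is (2). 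This also shows $v_p(a_m^{(n)}(p,l)k^m)\ge m\frac{p-2}{p-1}\to\infty$, justifying convergence of the series $\sum_m a_m^{(n)}(p,l)k^m$ for integer $k$.

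I do not expect a serious obstacle: this is the exact analogue of Proposition~\ref{lemma1}, but easier, since for $n\ge 1$ the coefficient $a_n(G_k^*)$ is given by the closed form \eqref{eq1} rather than by a $p$-adic $L$-function, so the Taylor expansion in $k$ is produced by hand through the $p$-adic exponential. The only points needing a little care are the verification that $v_p(\log\langle d\rangle)\ge 1$ for odd $p$ — which makes the exponential series both converge and represent $\langle d\rangle^k$ — and the bookkeeping with Legendre's formula; both are routine.
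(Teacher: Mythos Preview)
Your proof is correct. The paper follows a closely related but slightly different technical route: instead of writing $\langle d\rangle^{k}=\exp\bigl(k\log\langle d\rangle\bigr)$ and expanding the exponential, it sets $\langle d\rangle=1+pq_d$ with $q_d\in\Z_p$, expands $\langle d\rangle^{k}=\sum_{j\ge 0}\binom{k}{j}p^{\,j}q_d^{\,j}$ via the binomial series, and then rewrites each $\binom{k}{j}=\frac{1}{j!}\bigl(k^{j}+b_{j,j-1}k^{j-1}+\cdots+b_{j,1}k\bigr)$ as a polynomial in $k$ with integer coefficients before collecting powers of $k$. The resulting coefficients $a_m^{(n)}(p,l)$ are of course the same in both approaches (there is a unique convergent power-series expansion), and both arguments reduce the valuation estimate to $v_p\bigl(p^{j}/j!\bigr)\ge j-\tfrac{j}{p-1}$, i.e.\ to Legendre's formula. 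Your exponential version is a bit more direct --- it produces the power series in $k$ immediately and yields an explicit closed formula for $a_m^{(n)}(p,l)$ --- whereas the paper's binomial version has the minor advantage that all intermediate quantities visibly lie in $\Z_p$ until the final division by $j!$.
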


\begin{proof} 
For $(s,u)\in X$ and $u$ is even, 
by \eqref{eq1}, we have 
\begin{equation}\label{eq5} 
a_{j}(G_{s,u}^{*})=\sum_{\substack{d\mid j \\(p,d)=1}}d^{-1}\omega(d)^{u}\langle d \rangle^{s}.
\end{equation}
Write $\langle d \rangle=1+pq_{d}$ with $q_{d}\in\mathbb{Z}_{p}$, we have 
$$
\langle d \rangle^{s}=\sum_{m=0}^{\infty} \binom{s}{m} p^{m}q_{d}^{m}.
$$
Substituting the above  into \eqref{eq5}, we have
\begin{equation*}
\begin{aligned}
a_{j}(G_{s,u}^{*})&=\sum_{\substack{d\mid j \\(p,d)=1}}d^{-1}\omega(d)^{u}\sum_{m=0}^{\infty} \binom{s}{m} p^{m}q_{d}^{m}\\
&=\sum_{m=0}^{\infty} \binom{s}{m}p^{m}\sum_{\substack{d\mid j\\(p,d)=1}}q_{d}^{m}d^{-1}\omega(d)^{u}. 
\end{aligned}
\end{equation*}
Thus, we obtain 
\begin{equation}
\label{eq:anG}
\begin{split}
a_{j}(G_{k}^{*})=a_{j}(G_{k,k}^{*})
& =\sum_{m=0}^{\infty} \binom{k}{m} p^{m} \sum_{\substack{d\mid j\\(p,d)=1}}q_{d}^{m}d^{-1}\omega(d)^{k} \\ 
& =\sum_{m=0}^{\infty} \binom{k}{m} p^{m} \sum_{\substack{d\mid j\\(p,d)=1}}q_{d}^{m}d^{-1}\omega(d)^{l}, 
\end{split}
\end{equation}
where the last equality comes from the fact that  $\omega(a)^{k}=\omega(a)^{l}$ if  $k\equiv l \pmod{p-1}$. 

For each $1\le m \le k$, we have 
\begin{equation}
\label{eq:binom}
\begin{split}
\binom{k}{m} & = \frac{k(k-1)\cdots (k-m+1)}{m!}   \\
& = \frac{1}{m!} (k^m + b_{m,m-1}k^{m-1} + \cdots + b_{m,1} k)
\end{split}
\end{equation}
for some integers $b_{m,1}, \ldots, b_{m,m-1} \in \Z$ dependings only on $m$. 

Substituting \eqref{eq:binom} into \eqref{eq:anG}, 
we obtain 
$$
a_{j}(G_{k}^{*})=\sum_{m=0}^{\infty}a_{m}^{(j)}(p,l)k^{m}
$$
for some $a_{m}^{(j)}(p,l)\in \mathbb{Q}_{p}$ satisfying
\begin{align*}
v_{p}(a_{m}^{(j)}(p,l)) 
& \geq  \min\{v_p(p^m/m!), v_p(p^{m+1}/(m+1)!), \ldots\} \\ 
& \ge m-\frac{m}{p-1}, 
\end{align*}
where the number of terms in the $\min$ function is finite and the last inequality follows from the fact that $v_p (m!) \le m/(p-1)$.   
This gives the conclusion (1) of the proposition. 
The conclusion (2) (in the case $p \geq m+2$) follows from (1) directly by noticing $v_{p}(a_{m}^{(j)}(p,l)) \in \Z$.   
\end{proof}

Applying Proposition~\ref{lemma3}, we can also obtain a congruence relation for the coefficient $a_j(G^*_k)$ 
in polynomial index. 

\begin{proposition}\label{lemma4}
For any integer $j \ge 1$, the congruence
\begin{equation*}
\sum_{i=1}^n g_i(p)a_{j}(G^*_{f_i(p)})\equiv 0\mod p^N
\end{equation*}
holds for every prime $p>P$ if the conditions {\bf C2}, {\bf C3} and {\bf C4} hold. 
\end{proposition}

\begin{proof}
We apply the same strategy as in the proof of Proposition~\ref{lemma2}. 
 
Since $p > P$ and noticing the choice of $P$, we know that $f_i(p) \ge 4$ for each $1 \le i \le n$.  
In view of the convention \eqref{G*-convention}, we  consider the quantity
\begin{equation*}
A^{(j)}(p)=\sum_{i=1}^{n}g_{i}(p)a_{j}(G^*_{f_{i}(p)}).
\end{equation*}
By Proposition~\ref{lemma3}, we have
\begin{equation*}
\begin{aligned}
A^{(j)}(p)
&=\sum_{\substack{i=1\\ \textrm{$f_i(p)$ even}}}^{n}g_{i}(p)\sum_{m=0}^{\infty}a_{m}^{(j)}(p,f_{i}(p))f_{i}(p)^{m}    \\
&=\sum_{\substack{h\in\mathbb{Z}/(p-1)\mathbb{Z}\\ \textrm{$h$ even, $m\geq 0$}}} \sum_{\substack{i=1\\f_{i}(p)\equiv h\!\!\!\!\mod p-1}}^{n} g_{i}(p)f_{i}(p)^{m}a_{m}^{(j)}(p,h).
\end{aligned}
\end{equation*}
Since $f_{i}(p)\equiv f_{i}(1)\mod p-1$, we have
\begin{equation}\label{eq9*}
\begin{aligned}
A^{(j)}(p)&=\sum_{\substack{\textrm{even $l\in\mathbb{Z}$} \\ m\geq 0}} \sum_{\substack{i=1\\f_{i}(1)=l}}^{n} g_{i}(p)f_{i}(p)^{m} a_{m}^{(j)}(p,l)\\
&=\sum_{\substack{l\leq 2~~ \textrm{even}\\ m\geq 0}}\sum_{\substack{i=1\\f_{i}(1)=l}}^{n} g_{i}(p)f_{i}(p)^{m}a_{m}^{(j)}(p,l)  \\
&\quad+\sum_{\substack{l\geq 4~~ \textrm{even}\\ m\geq 0}}\sum_{\substack{i=1\\f_{i}(1)=l}}^{n}g_{i}(p)f_{i}(p)^{m}a_{m}^{(j)}(p,l).
\end{aligned}
\end{equation}

For any even integer $l \ge 4$ satisfying $l=f_i(1)$ for some $1 \le i \le n$,  by Proposition~\ref{D} we have
\begin{equation*}
a_{0}^{(j)}(p,l)=a_{j}(G_{l}^{*})-\sum_{m=1}^{\infty}a_{m}^{(j)}(p,l)l^{m}.
\end{equation*}
Substituting the above equation into \eqref{eq9*}, we have
\begin{equation}\label{eqnew3}
\begin{aligned}
A^{(j)}(p)&=\sum_{\substack{l\leq 2~~ \textrm{even}\\ m\geq 0}}\sum_{\substack{i=1\\f_{i}(1)=l}}^{n}g_{i}(p)f_{i}(p)^{m}a_{m}^{(j)}(p,l)\\
  &\quad+\sum_{l\geq 4~~ \textrm{even}}a_{j}(G_{l}^{*}) \sum_{\substack{i=1\\f_{i}(1)=l}}^{n} g_{i}(p)
   \\&\quad+\sum_{\substack{l\geq 4~~ \textrm{even}\\ m\geq 1}}\sum_{\substack{i=1\\f_{i}(1)=l}}^{n} g_{i}(p)(f_{i}(p)^{m}-l^{m})a_{m}^{(j)}(p,l).
\end{aligned}
\end{equation}

As in the proof of Proposition~\ref{lemma2}, under the condition {\bf C2} and the choices of $p$ and $P$ and using Proposition~\ref{lemma3},  
for every even integer $l\leq 2$ and $m\geq 0$, we obtain 
\begin{equation}\label{eq:C2-2}
\sum_{\substack{i=1\\f_{i}(1)=l}}^{n}g_{i}(p)f_{i}(p)^{m}a_{m}^{(j)}(p,l) \equiv 0    \mod p^{N}.
 \end{equation}
Similarly, under the condition {\bf C3}, for every even integer $l\geq 4$ and $m\geq 1$, we have
\begin{equation}\label{eq:C3-2}
\sum_{\substack{i=1\\f_{i}(1)=l}}^{n} g_{i}(p)(f_{i}(p)^{m}-l^{m})a_{m}^{(j)}(p,l)\equiv 0\mod p^{N}. 
\end{equation}
Also, under the condition {\bf C4} and noticing $a_{j}(G_{l}^{*}) \in \Z_p$ by \eqref{eq2-1},  
for every even integer $l\geq 4$ we have  
\begin{equation}\label{eq:C4} 
a_{j}(G_{l}^{*}) \sum_{\substack{i=1\\f_{i}(1)=l}}^{n} g_{i}(p)\equiv 0\mod p^{N}.
\end{equation}

Finally, by \eqref{eqnew3}, \eqref{eq:C2-2}, \eqref{eq:C3-2} and \eqref{eq:C4}, 
 we conclude that $A^{(j)}(p)\equiv 0\mod p^{N}$ for any prime $p>P$. 
 This completes the proof. 
\end{proof}

We are now at the point to prove Theorem \ref{main}.

\begin{proof}[Proof of Theorem \ref{main}]
Since $p>P$ and noticing the choice of $P$,  we have $f_{i}(p) > N$ for each $1 \le i \le n$.
Thus, by \eqref{con}, for any $1 \le i \le n$ with even $f_i(p)$ we have
\begin{equation}   \label{eq:GG1}
G_{f_{i}(p)}\equiv G_{f_{i}(p)}^{*}\mod p^{N}.
\end{equation}
Otherwise if $f_i(p)$ is odd, then by the conventions \eqref{G-convention} and \eqref{G*-convention}, 
we have $G_{f_{i}(p)} = G^*_{f_{i}(p)}=0$, and so \eqref{eq:GG1} still holds. 
On the other hand, by Propositions~\ref{lemma2} and \ref{lemma4}, we directly obtain 
\begin{equation}  \label{eq:GG2}
\sum_{i=1}^n g_i(p)G^*_{f_i(p)} \equiv g_0(p)\mod p^N
\end{equation}
 for every prime $p > P$ if all the conditions {\bf C1}, {\bf C2}, {\bf C3} and {\bf C4}  hold.  
The desired result  now follows from \eqref{eq:GG1} and \eqref{eq:GG2}.
\end{proof}

Finally, we prove Corollary~\ref{cor:Kummer}. 

\begin{proof}[Proof of Corollary~\ref{cor:Kummer}]
First, by assumption, it is easy to see that the conditions {\bf C1} and {\bf C4} hold. 

Since $f_1(1) = \ldots = f_n(1)$ and $g_1 + \ldots + g_n = 0$, for verifying the conditions {\bf C2} and {\bf C3}, 
it suffices to show that for any $m \ge 1$, 
$$
v_t\Big( \sum_{i=1}^{n} g_i(t)f_i(t)^m\Big) \ge N-1. 
$$

Now, we first prove the case $m=1$. Let $d = \min_{1 \le i,j \le n} v_t(f_i - f_j)$. Then, $N = d + 1$. 
The case $d=0$ is trivial. Assume $d \ge 1$ and write $F(t) = \sum_{i=1}^{n} g_i(t)f_i(t)$. 
To prove $v_t(F) \ge d$, it suffices to show that $F^{(k)}(0) = 0$ for any $0 \le k \le d - 1$, where $F^{(k)}$ denotes the $k$-th derivative of $F$. 
Note that 
$$
F^{(k)}(t) = \sum_{j=0}^{k} \binom{k}{j} \sum_{i=1}^{n} g_i^{(j)}(t) f_i^{(k-j)}(t).
$$
Since $0 \le k \le d - 1$, by definition we have $f_1^{(k-j)}(0) = \ldots = f_n^{(k-j)}(0)$, and so 
$$
\sum_{i=1}^{n} g_i^{(j)}(0) f_i^{(k-j)}(0) =f_1^{(k-j)}(0) \sum_{i=1}^{n} g_i^{(j)}(0)  = 0, 
$$
where we use the assumption $g_1 + \ldots + g_n = 0$. 
Hence, we have $F^{(k)}(0) = 0$ for any $0 \le k \le d - 1$. 
This completes the proof of the case $m=1$. 

For $m \ge 2$, we have 
$$
\min_{1 \le i,j \le n} v_t(f_i^m - f_j^m) \ge \min_{1 \le i,j \le n} v_t(f_i - f_j) =d . 
$$
Hence, applying the same argument as the above, we obtain 
$$
v_t\Big( \sum_{i=1}^{n} g_i(t)f_i(t)^m\Big) \ge N-1. 
$$
The desired result now follows. 
\end{proof}

 \section*{Acknowledgement}
The authors thank Julian Rosen for his detailed explanation of an equality in his paper. 
Su Hu is supported by the Natural Science Foundation of Guangdong Province, China (No. 2020A1515010170).  Min-Soo Kim is supported by the National Research Foundation of Korea (NRF) grant funded by the Korea government (MSIT) (No. 2019R1F1A1062499). Min Sha was partly supported by a Macquarie University Research Fellowship.

\bibliography{central}

\end{document}